\newtheorem{remark}{Remark}%
\definecolor{darkgreen}{rgb}{0.0,0.7,0.0}
\definecolor{hotpink}{rgb}{0.9,0,0.5}
\theoremstyle{definition}
\newtheorem{example}{Example}
\newtheorem{prop}{Proposition}
\newcommand\footnoteref[1]{\protected@xdef\@thefnmark{\ref{#1}}\@footnotemark}
\newcommand {\mat}  [1] {\left[\begin{array}{#1}}
\newcommand {\rix}      {\end{array}\right]}
\def\mystrut#1{\rule{0cm}{#1}}  % E.g. 0.4cm % adds vspace after rule
\DeclareMathOperator{\diag}{diag}
\newcommand{\bRmn}{{\mathbb R}^{m \times n}}
\newcommand{\be}{\begin{equation}}
\newcommand{\ee}{\end{equation}}
\newcommand{\ba}{\begin{array}}
\newcommand{\ea}{\end{array}}
\newcommand{\R}{{\mathbb R}}
\newcommand{\mc}{\multicolumn}
\newcommand{\tnum}		{t} 
\newcommand{\pt}		{k} 
\newcommand{\Vt}		{V_{\pt}}
\newcommand{\Ut}		{U_{\pt}}
\newcommand{\At}		{A_{\pt}}
\newcommand{\St}		{\Sigma_{\pt}}
\newcommand{\tqs}{\textquotesingle}
\newcommand{\Aerrtot}	{\mathcal{E}_{\scalebox{0.7}{\hspace*{1pt}$tot$}}^{\scalebox{0.7}{\hspace*{1pt}$err$}}}
\newcommand{\AVerr}	{\mathcal{E}_{\scalebox{0.7}{\hspace*{1pt}$1$}}}
\newcommand{\ATUerr}{\mathcal{E}_{\scalebox{0.7}{\hspace*{1pt}$2$}}}
\newcommand{\UVerr}	{\mathcal{UV}^{\scalebox{0.7}{\hspace*{1pt}$err$}}}
\newcommand{\Tcputot}	{\mathcal{T}_{\scalebox{0.7}{\hspace*{1pt}$tot$}}^{\scalebox{0.7}{\hspace*{1pt}$cpu$}}}
\newcommand{\Tcpupsvd}	{\mathcal{T}_{\scalebox{0.7}{\hspace*{1pt}$psvd$}}^{\scalebox{0.7}{\hspace*{1pt}$cpu$}}}
\newcommand{\psvds}{{\tt psvd}\xspace} 
\newcommand{\psvdns}{{\tt psvd}}
\newcommand{\psvdzns}{{\tt psvd0}}
\newcommand{\psvdRs}{{\tt psvdR}\xspace} 
\newcommand{\psvdRns}{{\tt psvdR}}
\newcommand {\basicbox} 		{\begin{tcolorbox}[width=\textwidth,colback={black!1}]}
\newcommand {\overbox}      {\end{tcolorbox}}
\newcommand {\rbasicbox} 		
{\begin{tcolorbox}[width=\textwidth,colback={red!5}]}
\newcommand {\bbasicbox} 		
{\begin{tcolorbox}[width=\textwidth,colback={blue!5}]}
\newcommand {\gbasicbox} 		
{\begin{tcolorbox}[width=\textwidth,colback={green!10}]}
\newcommand {\ybasicbox} 		
{\begin{tcolorbox}[width=\textwidth,colback={yellow!10}]}
\newcommand {\mycomment}[1]{} % has the effect of %%%-ing text out
\begin{document}

\title[Article Title]{A Hybrid Algorithm for Computing a Partial Singular Value Decomposition Satisfying a Given Threshold}

\author[1]{\fnm{James} \sur{Baglama}}\email{jbaglama@uri.edu}
%\equalcont{All authors contributed equally to this work.}
\author[1]{\fnm{Jonathan} \sur{Ch\'{a}vez-Casillas}}\email{jchavezc@uri.edu}
%\equalcont{All authors contributed equally to this work.}

\author[1]{\fnm{Vasilije} \sur{Perovi\'{c}}}\email{perovic@uri.edu}
%\equalcont{All authors contributed equally to this work.}

\affil[1]{
\orgdiv{Department of Mathematics and Applied Mathematical Sciences}, 
\orgname{University of Rhode Island}, 
\orgaddress{
%   \street{45 Upper College Rd}, 
    \city{Kingston}, \state{RI}, \postcode{02881}, \country{USA}}%
}

\abstract{
In this paper, we describe 
a new hybrid algorithm for computing 
all singular triplets above a
given threshold and provide 
its implementation in 
MATLAB/Octave and {\sf R}. 
The high performance of our codes 
and ease at which they can be 
used, either independently 
or within a larger numerical scheme, are 
illustrated through several 
numerical 
examples with applications 
to matrix completion 
and image compression. 
Well-documented MATLAB and {\sf R} codes 
are provided for public use. 
}

\keywords{
partial singular value decomposition, {\tt svds},
MATLAB, {\sf R}, matrix completion, 
singular value thresholding  
}

\maketitle
%%%%% Main Text
%%%%%%%%%%%%%%%%%%%%%%%%%%%%%%
%%
%%  Sec 1: Introduction
%%
%%%%%%%%%%%%%%%%%%%%%%%%%%%%%%
\section{Introduction}\label{sec:intro}
A fundamental tool 
arising in diverse areas of applications, e.g., 
matrix completion \cite{cai2010-matrix-completion,feng2018faster,kumar2022efficient,li2017svt,li2017fast}, 
imaging \cite{asnaoui2019image,erichson2019randomized,ji2016rank,narwaria2011svd}, 
principal component analysis \cite{cherapanamjeri2017thresholding,jollIffe2005principal,xu2010robust}, 
machine learning \cite{elden2019matrix}, and genomics \cite{alter2000singular,benchmarking2020rna}, 
is the computation of a 
{\em partial singular value decomposition (PSVD)}  
of a large sparse matrix 
$A \in \R^{m \times n}$,
 \begin{equation} \label{eq:psvd}
A \, \Vt \, = \,  \Ut \, \St \, + \, \AVerr , \qquad \  
A^T  \Ut \, = \,  \Vt \, \St, + \, \ATUerr , \qquad \ 
A_k \, = \, \Ut \, \St  \Vt^T , 
\end{equation}
where 
$\St = \diag ( \sigma_1, \ldots, \sigma_{\pt})  \in \mathbb{R}^{\pt \times \pt}$ 
with singular values $\sigma_1 \geq \ldots \geq\sigma_{\pt} > 0$, 
$\Ut \in \mathbb{R}^{m \times {\pt}}$, $\Vt~\in~\mathbb{R}^{n \times {\pt}}$ 
have orthonormal columns 
of associated singular vectors, 
$A_\pt$ is a rank $\pt$ 
approximation of $A$, 
and 
$\AVerr \in \mathbb{R}^{m \times {\pt}}$,
$\ATUerr \in \mathbb{R}^{n \times {\pt}}$ 
are approximation errors.  
A singular value with its associated 
left and right singular vectors 
is referred to 
as a {\em singular triplet}. 
{In this paper we investigate 
various 
computational aspects of determining 
{\em all} singular triplets corresponding to singular values of $A$ above a 
user-specified threshold parameter {\em sigma}, 
or in other words, 
determining a $\pt$--PSVD of $A$ 
such that $\sigma_k \geq sigma$ and $\sigma_{k+1} < sigma$.} 
When $A$ is of modest size, 
a $\pt$-PSVD of $A$ can be obtained 
by truncating its full SVD  
which ultimately determines  
{\em all} singular triplets of $A$ 
(see \cite{golub1965calculating}). 
However, 
we do not consider this approach 
since the full SVD is designed for dense matrices and is not scalable  to 
large sparse matrices, 
becoming computationally 
and memory intensive. Furthermore, we may assume  
$A$ and $A^T$ are 
only accessible via 
matrix--vector product routines which can be 
optimized for respective 
sparsity  structure. 

Over the last few decades, 
various numerical schemes with 
publicly available software 
have been developed 
for finding an 
{\em approximate $\pt$--PSVD}  
of a large sparse matrix $A$, 
e.g.,  PROPACK \cite{larsen1998lanczos},
{\tt irlba} \cite{baglama2005augmented,lewis2021irlba}, 
{\tt primme\underline{\hspace{0.15cm}}svds} \cite{wu2017primme_svds}, 
{\tt svdifp} \cite{liang2014computing}, 
{\tt rsvd} \cite{erichson2019randomized}, 
{\tt svds-C} \cite{feng2024svds},
RSpectra \cite{qiu2019rspectra}, 
{\tt dashSVD} \cite{feng2024}, 
Octave's {\tt svds}\cite{octave2023}, 
and commercially available MATLAB's {\tt svds}  
\cite{MATLAB:R2024a}. 
A shared feature  by all these 
methods is that they all require 
knowing the number of 
desired singular triplets, $\pt$, 
in advance, 
thus significantly limiting their use
in applications 
where $\pt$ is not readily available. 
For example, in the now classical work 
on matrix completion by Cai, Cand\`{e}s, 
and Shen \cite{cai2010-matrix-completion}, 
the authors developed a 
method  
for the problem of recovering an  
approximately low-rank matrix 
from a sampling of its entries. 
The main computational kernel in 
their proposed method 
\cite[Alg.\! 1]{cai2010-matrix-completion} 
consists of having to 
{\em repeatedly} compute larger and larger 
PSVDs corresponding to 
singular values above a certain threshold 
(see SVT-MC Algorithm (\ref{matrix-completion-alg}) in Example \ref{example2}). 
A similar 
kernel also 
appears in an approach  for   
solving large-scale {\em linear discrete ill-posed} 
problems, 
when $A$ is very ill-conditioned 
\cite{onunwor2017computation}, 
which relies on 
solving a smaller least-squares problem 
associated with a $\pt$-PSVD of $A$, 
where $\pt$ is unknown a priori and 
is determined {\em adaptively} 
so it 
satisfies  the discrepancy principle.
The most direct approach for these problems would be to compute some predetermined 
number $\tnum$ of largest singular 
triplets by using one of the  
previously listed routines 
and check whether the threshold is met. 
If the threshold is not met,  
then current limitations of 
these routines 
necessitate having to 
{\em recompute} 
$\widetilde{\tnum} > t$ 
of largest singular triplets. 
This can be unnecessarily
expensive and it completely 
dismisses the knowledge of the 
previously computed $\tnum$ singular triplets. Our proposed algorithm completely 
circumvents this problem.

In this paper we have developed 
a hybrid algorithm that efficiently and systematically computes (an unknown number of) all singular triplets above a 
user-specified threshold. 
So far, to the best of our knowledge, 
this problem was only addressed 
either by 
adapting algorithms that work 
on a larger 
symmetric eigenproblem 
associated with 
$C = 
\big[
\begin{smallmatrix}
	0 & A \\
	A^T & 0
\end{smallmatrix}
\big]$ 
\cite{li2017svt} 
or by adaptation of various randomization 
techniques (e.g., \cite{ji2016rank, feng2023fast,
 yu2018efficient}) 
 that utilize a 
different stopping criteria 
(see Remark~\ref{rem:energy}).  
Our proposed algorithm is 
based on the {\em 
explicit deflation procedure applied 
to Golub-Kahan-Lanczos Bidiagonalization 
(GKLB)-based methods}  \cite{baglama2023explicit} 
and it greatly extends the functionality 
of the  highly popular 
MATLAB routine {\tt svds} 
\cite{MATLAB:R2024a}. 
What differentiates this algorithm from the others is that  it works 
directly on matrix $A$ 
(no need to consider  
$C = 
\big[
\begin{smallmatrix}
	0 & A \\
	A^T & 0
\end{smallmatrix}
\big]$) 
and it maintains 
the overall high accuracy 
one expects from {\tt svds}. 
Furthermore, our results  can also 
be easily adapted 
to various existing {\tt svds}-like 
implementations 
in programming languages such as 
C ({\tt svds-C} and {\tt irlb.c}), 
C{}++ ({\tt CppIrlba}), 
Python ({\tt irlb.py}), 
R ({\tt irlba.R}), and  
Julia ({\tt svdl.jl})%
.\footnote{All are public codes that can be found on GitHub: \url{https://github.com}.}

{With respect to GKLB-based methods, 
the first observation relevant to this paper is that they} 
can be thought of as 
a one--sided PSVD approximations of $A$ 
where either $\AVerr$ or $\ATUerr$ 
in \eqref{eq:psvd} is 
exactly zero while the other value 
contains the approximation error 
\cite{baglama2005augmented,larsen1998lanczos}.
{Furthermore,}
the structure of 
GKLB-based methods 
have the added advantage of 
allowing for the explicit 
deflation technique 
to  be applied only to either 
the left or right singular vectors of $A$, depending on its dimensions,  and 
thus significantly reducing 
the overall computational cost. 
Due to our iterative deflation process, 
a modest non--zero error,  
that is orthogonal to computed 
basis vectors, 
is added to the side in the GKLB 
method that had theoretically zero error 
\cite{baglama2023explicit}.

Although the error growth is 
theoretically manageable, 
our experiences 
while developing our public software on a wide--range of problems
with varying singular value distributions 
showed that not to be the case 
and thus highlighting that even 
a seemingly straightforward 
explicit deflation as in \cite{baglama2023explicit} 
requires special care when implementing.
This led us to 
develop a new, robust hybrid scheme 
that combines a GKLB-based method 
\cite{baglama2023explicit}
together with a variant of the block SVD power method \cite{bentbib2015block}, 
the latter being 
solely used 
to restore 
the one--sided GKLB structure as needed. 
This helped with maintaining 
strong orthogonality 
among  the deflated and newly 
computed singular vectors, 
computing all multiple singular values, 
and avoiding the pitfall of recomputing 
mapped singular values -- 
see Section \ref{sec:alg} 
and Example \ref{example1}. 

To make our algorithm more accessible to a wider audience, 
we developed three 
standalone and well-documented computer codes which can be 
run either independently  or as a 
part of a larger routine. 
All three codes repeatedly call a GKLB-based method based on \cite{baglama2005augmented} to compute a PSVD of 
$A$ while 
applying an explicit deflation technique. 
{In other words}, they only differ in 
which {\psvdns} method is being called 
and the programming environment.  
Specifically, our codes 
{\tt svt\underline{\hspace{0.15cm}}svds.m} (MATLAB), 
{\tt svt\underline{\hspace{0.15cm}}irlba.m} 
(MATLAB/Octave), 
and 
{\tt svt\underline{\hspace{0.15cm}}irlba.R} 
({\sf R}), 
call 
MATLAB 
native
function {\tt svds}, 
a MATLAB/Octave internal implementation 
of {\tt irlba} \cite{baglama2005augmented}, 
and the {\sf R}~package  {\tt irlba.R} 
\cite{lewis2021irlba}, 
respectively. 
All codes and documentation are available on GitHub \url{https://github.com/jbaglama/svt}.
It is important to note here 
that while both  codes 
{\tt svds.m}
and 
{\tt irlba.m}  
are  based on the  
thick--restarted GKLB method 
\cite{baglama2005augmented}, 
there are subtle differences 
in the performance of 
{\tt svt\underline{\hspace{0.15cm}}svds.m} 
and 
{\tt svt\underline{\hspace{0.15cm}}irlba.m}
(see Section~\ref{sec:exp}).
Furthermore, 
since {\tt svt\underline{\hspace{0.15cm}}irlba.m}
provides its own standalone implementation of {\tt irlba}
it can also run in the freely available Octave, whereas
{\tt svt\underline{\hspace{0.15cm}}svds.m} cannot.
 This is particularly relevant 
since Octave's {\tt svds} 
is {\em not} a GKLB-based method 
and thus it cannot be used 
in our proposed framework. 
In fact, 
Octave's {\tt svds} 
creates the matrix {\tt C =[0\ A;A\tqs \ 0]} 
and calls Octave's {\tt eigs} function 
to compute the associated eigenpairs 
before translating them to the 
singular triplets of $A$. 
This implementation was 
also used by MATLAB's {\tt svds} 
at the time when 
\cite{cai2010-matrix-completion} 
first appeared $(2010)$
and was the rationale
for the authors 
to use PROPACK (GKLB-based method) 
in their SVT-MC Algorithm instead 
of that version of {\tt svds} 
citing a speedup of factor of ten 
\cite[Sec. 5.1.1]{cai2010-matrix-completion}.

We are only aware of 
one comparable MATLAB code 
that computes all singular triplets above a given user-inputted singular value threshold, 
{\tt svt.m} from \cite{li2017svt}. 
Similar to Octave's {\tt svds},  the code {\tt svt.m}  
uses {\tt C =[0\ A;A\tqs \ 0]} 
and calls MATLAB's {\tt eigs} function. 
As we will see 
in Section~\ref{sec:exp}, 
this is very inefficient and our methods consistently outperform {\tt svt.m}. 
With respect to {\tt svt\underline{\hspace{0.15cm}}irlba.R}, 
to the best of our knowledge 
this is the only 
{\sf R} implementation that computes all singular values 
above a specific threshold  
thus making it very valuable 
to diverse communities that are heavy {\sf R} users.

An added feature that further enhances 
the overall functionality of our codes 
is that instead of  providing a thresholding value 
that singular triplets must exceed
users may opt to specify a desired 
energy level of a PSVD, 
where $energy = \|\At\|_F^2/ \|A\|_F^2 = \Sigma_{i=1}^\pt \sigma_i^2/\|A\|_F^2$, with applications including imaging and  
nonlinear dimensionality reduction, 
see e.g. \cite{ji2016rank,sadek2012svd,feng2023fast} 
and the references therein. 
We note that under 
some mild conditions 
(Proposition~\ref{prop:nrmse-energy}) 
the notion of $energy$ 
is also closely related to another 
popular error measurement 
based on the Frobenius norm,
the normalized root mean squared error,
$nrmse = \|A - \At\|_F/ \|A\|_F$, \cite{erichson2019randomized,sadek2012svd}. 

\medskip
\begin{prop} \label{prop:nrmse-energy}
{\it Given $A_k$ with either $\AVerr$ or 
$\ATUerr$ equal to zero \eqref{eq:psvd}, then
$\|A - \At\|_F^2 = \|A\|_F^2 - \|\At\|_F^2$.}
\end{prop}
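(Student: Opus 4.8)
The plan is to observe that forcing $\ATUerr=0$ (or $\AVerr=0$) in \eqref{eq:psvd} makes $\At$ an \emph{orthogonal projection} of $A$ onto a subspace, after which the claimed identity is simply the Pythagorean theorem for the Frobenius inner product $\langle X,Y\rangle_F=\operatorname{tr}(X^TY)$. Since the hypothesis allows either $\AVerr=0$ or $\ATUerr=0$, I would treat the two cases in parallel, the only difference being whether one projects onto the column space or the row space of $A$.

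First, suppose $\ATUerr=0$. Then \eqref{eq:psvd} gives $A^T\Ut=\Vt\St$, hence $\Ut^TA=\St\Vt^T$ by transposition, and substituting this into $\At=\Ut\St\Vt^T$ yields $\At=\Ut\Ut^TA=PA$ with $P:=\Ut\Ut^T$. Because $\Ut$ has orthonormal columns, $P=P^T=P^2$, i.e.\ $P$ is the orthogonal projector onto $\operatorname{range}(\Ut)$, and therefore $A-\At=(I-P)A$. Symmetrically, if $\AVerr=0$ then $A\Vt=\Ut\St$, so $A\Vt\Vt^T=\Ut\St\Vt^T=\At$, i.e.\ $\At=AQ$ with $Q:=\Vt\Vt^T$ the orthogonal projector onto $\operatorname{range}(\Vt)$, and $A-\At=A(I-Q)$.

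It then remains to observe that the Frobenius cross term vanishes: in the first case $\langle PA,(I-P)A\rangle_F=\operatorname{tr}\!\big(A^TP(I-P)A\big)=0$ since $P(I-P)=0$ and $P^T=P$, whence $\|A\|_F^2=\|PA\|_F^2+\|(I-P)A\|_F^2=\|\At\|_F^2+\|A-\At\|_F^2$, and rearranging gives the stated identity; the case $\AVerr=0$ is identical with $AQ$ and $A(I-Q)$ in place of $PA$ and $(I-P)A$, using $Q(I-Q)=0$. Alternatively, and perhaps most directly, one can expand $\|A-\At\|_F^2=\|A\|_F^2-2\langle A,\At\rangle_F+\|\At\|_F^2$ and use cyclicity of the trace to get $\langle A,\At\rangle_F=\operatorname{tr}(\St\Vt^TA^T\Ut)=\operatorname{tr}(\St^2)=\|\At\|_F^2$ from $A^T\Ut=\Vt\St$ (respectively $\operatorname{tr}\big((A\Vt)^T\Ut\St\big)=\operatorname{tr}(\St^2)$ from $A\Vt=\Ut\St$), which immediately yields the claim. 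There is no genuine obstacle here; the only thing to keep straight is which side carries the projector — a left/column-space projector $\Ut\Ut^T$ when $\ATUerr=0$ versus a right/row-space projector $\Vt\Vt^T$ when $\AVerr=0$ — and to recall that the orthonormality of the columns of $\Ut$ (resp.\ $\Vt$) is exactly what makes it a true orthogonal projector.
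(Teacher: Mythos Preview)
Your proof is correct and takes essentially the same approach as the paper: both arguments amount to showing the Frobenius cross term $\langle \At, A-\At\rangle_F$ vanishes and then invoking the Pythagorean identity for $\|\cdot\|_F$. The paper packages this by citing the ``matrix--Pythagoras'' lemma \cite[Lemma~2.1]{boutsidis2014near} with $X=A^T-\At^T$ and $Y=\At^T$, whereas you make the projector structure $\At=PA$ (or $\At=AQ$) explicit and verify orthogonality directly via $P(I-P)=0$; your version is a bit more self-contained but not a genuinely different route.
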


\begin{proof}
Without loss of generality, 
assume $\AVerr=0$ in \eqref{eq:psvd}. 
Recall the matrix--Pythogoras Lemma  \cite[Lemma 2.1]{boutsidis2014near} which states that if 
$X,Y \in \R^{m \times n}$ and $Y^TX =0$, then  
$\|X+Y\|_F^2 = \|X\|_F^2 + \|Y\|_F^2$. The desired conclusion 
follows by setting $X = A^T - \At^T$, $Y^T=\At$,  and using \eqref{eq:psvd} with $\AVerr=0$ to show that $Y^TX =0$.
\end{proof}
From Proposition~\ref{prop:nrmse-energy}, 
it now follows that 
if $\AVerr=0$ and/or 
$\ATUerr=0$ in \eqref{eq:psvd}, 
then
\begin{equation} \label{energynrmse}
(nrmse)^2 = \|A - \At\|_F^2/ \|A\|_F^2 = \|A\|_F^2/ \|A\|_F^2 - \|\At\|_F^2/ \|A\|_F^2
= 1 - energy \, .
\end{equation}

\begin{remark} \label{rem:energy}
It is worth highlighting that 
for low-rank approximations
obtained via randomization, 
such as in 
{\rm \cite{feng2018faster}}, 
{\rm Proposition~\ref{prop:nrmse-energy}} 
corresponds to {\rm \cite[Thm.~1]{feng2018faster}}. 
Furthermore, it is worthwhile emphasizing 
that  the user-inputted threshold condition (singular value or energy percentage, or normalized root mean
squared error) {\em is not} 
used as stopping criteria within {\tt svds.m or irlba.m} methods in the proposed algorithm in this paper, 
but rather as an exit criterion 
for the wrapper codes {\tt svt\underline{\hspace{0.15cm}}svds.m}, {\tt svt\underline{\hspace{0.15cm}}irlba.m}, and {\tt svt\underline{\hspace{0.15cm}}irlba.R}. That is, the $\pt$--PSVD of $A$ is  computed to within a prescribed {\tt svds.m or 
irlba.m} method's tolerance {\rm ('tol' in \rm Table \ref{table1})} 
and then tested against the user-inputted  threshold condition. 
This differs from other routines, 
e.g. {\tt farPCA}\footnote{Date: June 28, 2024: GitHib: \url{https://github.com/THU-numbda/farPCA}} 
{\rm \cite{feng2023fast}}, 
that use the energy percentage or normalized root mean
squared error as a stopping criteria, 
and ultimately makes any 
comparison with respect to timing and accuracy
between those methods and ours 
 difficult to interpret {\rm (see comparison 
 of one-sided errors in Example~\ref{example3})}. 
We note that 
in applications such as imaging 
where the use of 
\eqref{energynrmse} as an  stopping criteria 
to compute a PSVD of a matrix is appropriate, 
randomized SVD methods like 
{\tt farPCA} {\rm \cite{feng2023fast}} 
are  competitively fast 
and ought to be considered. 
\end{remark}

The paper is organized as follows. 
The next section contains our main contribution 
in the form of a hybrid algorithm 
(detailed codes are avilable at GitHub \url{https://github.com/jbaglama/svt}), 
while in Section~\ref{sec:par} we 
provide a detailed description of 
all parameters and 
include a sample of one-line computer code function calls. 
Finally, in Section~\ref{sec:exp} we present several numerical experiments with application to matrix completion and image compression, 
while Section~\ref{sec:conc} contains concluding remarks.

%%%%%%%%%%%%%%%%%%%%%%%%%%%%%%
%%
%%  Sec 2: Algorithms
%%
%%%%%%%%%%%%%%%%%%%%%%%%%%%%%%
\section{Algorithms} \label{sec:alg}
In this section we only describe 
simplifications 
of our codes and refer the reader 
to publicly available documentation on GitHub
for further details. 
Our Algorithm~\ref{alg1} is a hybrid scheme that repeatedly calls a GKLB-based method and based on certain criteria calls a block SVD power ({\tt blksvdpwr}) method: Algorithm~\ref{alg2}. 
We first discuss the GKLB-based method, the criteria which connects the methods, and then finally the {\tt blksvdpwr} method.

At the center of our Algorithm~\ref{alg1} 
is the ability to 
repeatedly call a GKLB-based method 
(from now on generically referred to as a {\psvdns} method) 
to compute a PSVD of a matrix 
while taking the advantage of 
explicit deflation techniques 
\cite{baglama2023explicit}. 
Our codes 
{\tt svt\underline{\hspace{0.15cm}}svds.m}, 
{\tt svt\underline{\hspace{0.15cm}}irlba.m}. 
and 
{\tt svt\underline{\hspace{0.15cm}}irlba.R} 
are virtually the same modulo 
the choice of the  ${\psvdns}$ method 
and programming environment, 
i.e., ${\psvdns} \in \left\{ {\tt svds.m}, \, {\tt irlba.m}, \, {\tt irlba.R} \right\}$. 

We postpone providing 
detailed descriptions for 
all input {\em parameters} 
until Section~\ref{sec:par} 
(see Table~\ref{table1}) -- 
for now, we assume that 
they are all 
set to  their default values 
other than the possible input of an optional initial PSVD which inherently alters 
the start of Algorithm~\ref{alg1} (lines 3--7). 
The variable $\ell$ holds the current size of the PSVD of $A$ during an iteration, 
while the 
parameter structure {\em opts} 
(Algorithm~\ref{alg1}: line 17) possesses the tolerance, maximum iteration, subspace dimension, and 
a 
starting vector for 
a {\psvdns} method. 
Given these parameters a {\psvdns} method 
may not converge for all requested $\pt$ singular triplets. 
If some number of  
singular triplets have converged, then Algorithm~\ref{alg1} readjusts $\pt$ to that number  and continues. 
In the rare event  
that a {\psvdns} method  
returns with {\em no} 
converged singular triplets,  we increase the maximum number of iterations and the size of the subspace dimension before recalling a {\psvdns} method once again.
Through numerous experiments we have found that only one additional recall of a {\psvdns} method is required to get at least one singular triplet to converge. 
However, if after that one additional iteration a {\psvdns} method 
still produces no converged singular vectors, then Algorithm~\ref{alg1} exits at line 17  indicating {\psvdns} method failed (see Table \ref{table1}). 

\begin{algorithm}
\caption{\quad Hybrid Singular Value Threshold}
\label{alg1}
%\begin{multicols}{2}
\begin{algorithmic}[1]
\State {\textbf{Input:}}
%	$A$, {\it sigma}  or  {\it energy}; 
%{\it parameters}$^{\S}$. 
	$A$, threshold condition ({\it sigma}  or  {\it energy}); 
{\it parameters} (\,see Section \ref{sec:par}).
\smallskip
\State {\bf{Output:}} $U$; $S$; $V$.
\smallskip
\hrule 
\smallskip 
\If{nonempty$([U_0,S_0,V_0])$}
\State  $V=V_0$; $U=U_0$; $S=S_0$; $\ell = k_0$;
\If{$pwrsvd>0$}
\State $[U,S,V] = {\tt blksvdpwr}(A,V,U,pwrsvd)$;
\EndIf
\Else
\State
$A_d = A$; $V=[\ ]$; $U=[\ ]$; $S=[\ ]$; $\ell = 0$;
\EndIf
\While{$\ell \leq \min(m,n)$}
\If{$\ell > 0$ and $m \leq n$}
\State $A_d = A \!- U(U^TA)$
\ElsIf{$\ell > 0$ and $m > n$}
\State $A_d^T = A^T \!- V(AV)^T$
\EndIf
\State $[U_1,S_1,V_1]$ = {\psvdns}$(A_d, k$, opts);
\qquad \% \,${\psvdns} \, \in \, \{ {\tt svds.m, \, irlba.m, \, irlba.R}\}$
\State C1: $\ell >  0$ and ${\rm max}(|V_1^TV|\;\text{or}\;|U_1^TU|) > \sqrt{\epsilon}/(\ell+k)$;
\State C2: $\min(S_1) < \max(S)\sqrt{\epsilon}$;
\State C3: {\psvds} returned $>0$ and $< k$ values 
\If{\textup{(\,C1 or C2 or C3 or {\it pwrsvd} $>0$}\,)}
\State $[U,S,V] \!= \!{\tt blksvdpwr}(A,[V\ V_1],[U\ U_1],\max( 1, pwrsvd ))$;
\Else
\State $U =[U\ U_1]$; $V = [V\ V_1]$; $S={\tt blkdiag}(S,S_1)$;
\EndIf
\State $\ell = \ell + \rm{size}(S_1)$;
\If{$\min(S)\!<${\it sigma} or $\Sigma_{i=1}^k \sigma_i^2/\|A\|^2_F \geq energy$}
\State truncate and return $[U,S,V]$;
\Else
\State $k = k+incre$; $incre=2\cdot incre$;
\EndIf
\EndWhile
%\smallskip
\end{algorithmic}
\end{algorithm}

The deflation schemes from \cite{baglama2023explicit} (Algorithm~\ref{alg1}: lines 13 and 15) are not explicitly computed, but rather set--up as internal matrix--vector product routines. 
For {\tt svds.m} or {\tt irlba.m} this is done via an internally written function handle, 
while for {\tt irlba.R} we provide two options, the {\tt irlba.R} parameter {\it mult} and a custom matrix multiplication function ({\it cmmf}) for the user to choose 
(see Table \ref{table1}). We refer to the {\tt irlba.R} documentation for details on {\it mult} and {\it cmmf} \cite{lewis2021irlba}.

We now turn our attention to 
the {\em hybrid} aspect of our proposed 
method, 
namely the 
use of a block SVD power method, {\tt blksvdpwr} from  Algorithm~\ref{alg2}, 
within Algorithm~\ref{alg1}. 
Use of {\tt blksvdpwr} method 
is triggered either 
via the user-inputted parameter {\it pwrsvd} which forces   
its use on every iteration of 
Algorithm~\ref{alg1} 
or when a certain criteria 
is met 
(see three cases in Algorithm~\ref{alg1}, lines 18--20). 
Case C1 
computes the 
orthogonality ``level'' of the large basis vectors that are implicitly held orthogonal during 
the deflation process. 
From our experiments, we have noticed that the large basis vectors rarely lose 
orthogonality, though in some isolated cases this can happen, e.g.,  when $A$ is very ill--conditioned, 
a ``large'' tolerance is used, or if $A$ has a significant number of 
multiple interior singular values (see Example~\ref{example1}). 
The choice of required orthogonality ``level'' before 
{\tt blksvdpwr} gets called follows 
\cite[Thm. 5]{larsen1998lanczos}. 
A more challenging 
scenario is the case C2 largely 
due to the fact that the deflation scheme (Algorithm~\ref{alg1}: lines 13 and 15) maps the large singular values to ``zero'' which under certain numerical conditions can reappear, i.e.,  be recomputed by a {\psvdns} method (see Example~\ref{example1}). 
Case C1 generally catches the recomputing of already mapped singular values, though  numerous experiments showed  that this may still occur when  too many singular triplets are requested with respect to the rank of $A$ or when $A$ has a very large number of interior multiple singular values. 
Finally, case C3 is more of a precautionary case which typically indicates that the GKLB process struggled computing interior values and therefore, securing a more stringent orthogonality level among basis vectors helps with the overall convergence of a 
{\psvdns} method on the next iteration.

With respect to 
{\tt blksvdpwr} itself, 
we note that our Algorithm~\ref{alg2} is based on the  
routine described in \cite{bentbib2015block} 
with slight differences in its implementation when it comes to  lines 9 and 16 of Algorithm~\ref{alg2}. 
We compute the full SVD of the upper 
triangular matrix $R$, whereas the authors in \cite{bentbib2015block}  use $R$ as the approximate diagonal matrix 
$S$. 
For a large matrix $R$, this computation can appear to be expensive, however the computation of the full SVD 
of $R$ is not anymore expensive in the overall scheme of Algorithm~\ref{alg1} where a {\psvdns} method, which depends on a GKLB-based method, repetitively computes the full SVD of a projected matrix of a similar size \cite{baglama2005augmented}. 
It is  worth highlighting here that while 
it is theoretically possible 
to use 
Algorithm~\ref{alg2} 
to compute a PSVD of $A$ that is not our 
objective here and no comparisons 
are made with respect to this aspect. 
In our scenario, 
the  sole purpose 
of Algorithm~\ref{alg2}  
within Algorithm~\ref{alg1} 
is simply to ensure the overall orthogonality 
of the basis vectors, check the reliability of a PSVD of $A$, and restore the one--sided GKLB structure. 
Moreover, an arbitrary SVD algorithm 
can not be used in place of Algorithm~\ref{alg2} 
as accuracy of Algorithm~\ref{alg1} heavily relies 
on having the one-sided GKLB structure. 
Therefore, the  output from Algorithm~\ref{alg2} is of the form \eqref{eq:psvd} with 
$\AVerr=0$ and $\ATUerr\neq 0$ 
when $m \leq n$ 
or with $\ATUerr=0$ and $\AVerr\neq0$ in case 
when  $m > n$.

Finally, the determination of next $\pt$ value and $incre$ (Algorithm~\ref{alg1}: line 30) follows the discussion in \cite{li2017svt}, 
where $incre$ is doubled on each iteration -- the user can only preset the initial values of $\pt$ and $incre$ on input 
(see Table~\ref{table1}). 
It is worth noting here that a priori knowledge of an initial $\pt$ can enhance overall convergence. 
Some promising ideas 
on how to estimate an initial $\pt$ for a {\psvdns} method are based on using harmonic Ritz values \cite{baglama2013implicitly} or eigenvalue estimates of $A^TA$ \cite{di2016efficient}. 
These are beyond the scope of this paper and at this time are 
an ongoing area of research. 
The algorithm terminates (Algorithm~\ref{alg1}: line 27) if it 
successfully computed 
all singular triplets above the user-inputted 
threshold {\it sigma} or 
if $\Sigma_{i=1}^k \sigma_i^2/\|A\|^2_F \geq energy$ for the user-inputted $0< energy\leq 1$.

\medskip
\begin{remark} \label{rem:irlbaR}
When $m > n$ and for the {\sf R} code {\tt svt\underline{\hspace{0.15cm}}irlba.R} one iteration of {\rm Algorithm~\ref{alg2}} must run each iteration of {\rm Algorithm~\ref{alg1}} to safely still use the deflation theory in {\rm \cite{baglama2023explicit}} applied to the smaller dimension side.
This is because of how the {\sf R} package {\tt irlba.R} 
{\rm \cite{lewis2021irlba}} handles matrix--vector products with the transpose of $A$. 
\end{remark}

\begin{algorithm}
\caption{\quad Block SVD Power Method ({\tt blksvdpwr})}
\label{alg2}
\vspace*{-4mm}
\begin{multicols}{2}
\begin{algorithmic}[1]
\State{\bf{Input:}} 
	$A \in \bRmn$;\ $V$; $U$; $iter$
\smallskip
\State {\bf{Output:}} $U$; $S$; $V$.
\smallskip
\hrule 
\smallskip 
\If{$m \leq n$}
\State [$U,R$] = {\tt qr}($U$);
\For{$j=1:iter$}
\State [$V,R$] = {\tt qr}($A^TU$);
\State [$U,R$] = {\tt qr}($AV$);
\EndFor
\State [$u_r,S,v_r$] = {\tt svd}($R$);
\Else
\State [$V,R$] = {\tt qr}($V$);
\For{$j=1:iter$}
\State [$U,R$] = {\tt qr}($AV$);
\State [$V,R$] = {\tt qr}($A^TU$);
\EndFor
\State [$v_r,S,u_r$] = {\tt svd}($R$);
\EndIf
\State return $V=Vv_r$; $U=Uu_r$; $S$;
\end{algorithmic}
\small{  ({\tt qr} is the economy-size QR decomposition)\\
({\tt svd} is the full SVD method)}

\end{multicols}
\vspace*{-2mm}
\end{algorithm}

%%%%%%%%%%%%%%%%%%%%%%%%%%%%%%
%%
%%  Sec 3: Input, Parameters, Output
%%
%%%%%%%%%%%%%%%%%%%%%%%%%%%%%%
\section{Input, Parameters, and Output} \label{sec:par}
All  our codes 
share a common set of key  
input parameters,   
though some parameters are 
language specific, 
e.g., MATLAB/Octave vs {\sf R} (see Table~\ref{table1}).
Of particular note are the parameters 
{\it kmax} and {\it psvdmax} 
whose purpose is to prevent 
excess computational time 
by limiting the number of requested 
singular values to 
a {\psvdns} method ({\it kmax}) 
and the size  
of the output $[U,S,V]$ 
({\it psvdmax}). 
The user can maintain very strong orthogonality 
among basis vectors 
and reset the one--sided GKLB structure 
by setting 
{\it pwrsvd} $>0$, 
otherwise the error growth is monitored
(Algorithm~\ref{alg1}: lines 18--20) 
and reset as needed. 
The output and variable 
{\it FLAG} that alerts the user to method's convergence are also given 
in  Table~\ref{table1}. 

\begin{table}[htb!]
\centering
{\small
\caption{Parameters and output 
for {\tt svt\underline{\hspace{0.15cm}}svds.m}, {\tt svt\underline{\hspace{0.15cm}}irlba.m}, and {\tt svt\underline{\hspace{0.15cm}}irlba.R}.}
\label{table1}
\vspace*{-1mm}
\begingroup
\setlength{\tabcolsep}{5.1pt} % Default value: 6pt
\renewcommand{\arraystretch}{0.9} % Default value: 1
\setlength\extrarowheight{1.9pt}
\begin{tabular}{|@{}c|p{4.4in}|}
 \hline
 \mc{2}{|c|}{
 Parameters for {\tt svt\underline{\hspace{0.15cm}}svds.m}, {\tt svt\underline{\hspace{0.15cm}}irlba.m}, and {\tt svt\underline{\hspace{0.15cm}}irlba.R}
 }\\ \hline
\mystrut{3.7mm}{\it sigma}  &   threshold -- if missing returns top $k$ singular triplets\\
{\it energy} &  energy percentage ($\leq 1$) (\ref{energynrmse}) (cannot be combined with {\it sigma}) \\
{\it tol} &  tolerance used for convergence in the {\psvds} method 
(default: $\sqrt{eps}$)\\
{\it k} &  initial $k$ used in the {\psvds} method (default: 6)\\
{\it incre} &  initial increment added to $k$ (default: 5)\\
{\it kmax}     &  maximum value $k$ can reach  (default: $\min\{0.1\!\cdot\! \min(m,n),100\}$)\\             
{\it p0} &  initial vector in the {\psvds} method  (default: {\it p0 = randn})\\
\,\,{\it psvdmax} &  max. dim. of output 
(default: $\max(\min(100+{\rm size}(\text{\it S0}),\min(n,m)),k)$)\\  
{\it pwrsvd} &  performs 
$pwrsvd$ iterations of Algorithm~\ref{alg2} 
(default: $0$)\\
{\it display} &  if set to $1$, 
then display some diagnostic information (default: $0$)\\ \hline
\mc{2}{|c|}{
Parameters for {\tt svt\underline{\hspace{0.15cm}}svds.m} and {\tt svt\underline{\hspace{0.15cm}}irlba.m}
}\\ \hline
{\it m}  &  number of rows of $A$ -- required if A is a function handle \\
{\it n} &  number of columns of $A$ -- required if A is a function handle \\
{\it U0} & left singular vectors of a previous PSVD of $A$ (default: [\ ])\\
{\it V0} & right singular vectors of a previous  PSVD of $A$ (default: [\ ])\\
{\it S0} & diagonal matrix of singular values of a previous PSVD of $A$ (default: [\ ])\\ \hline
\mc{2}{|c|}{Parameters for {\tt svt\underline{\hspace{0.15cm}}irlba.R}}\\ \hline
{\it psvd0}  &  a list of a previous PSVD of $A$ (default: NULL)\\
{\it cmmf}   &  
if TRUE uses an internal 
custom matrix multiplication 
function to compute the 
matrix--vector product deflation 
technique 
(Algorithm~\ref{alg1}, lines 15, 17). 
If FALSE, then 
uses an internal 
{\tt mult} parameter from 
{\tt irlba.R} 
to perform the deflation technique  (default: FALSE)\\ \hline
\mc{2}{|c|}{
Output for {\tt svt\underline{\hspace{0.15cm}}svds.m} and {\tt svt\underline{\hspace{0.15cm}}irlba.m}
}\\ \hline
\,\,\,\mystrut{3.5mm}{$U,S,V$} & left singular vectors, diagonal matrix of singular values, right singular vectors\\
{\it FLAG} &  $0$ -- successful output -- either threshold or energy percentage satisfied\\
\   &   $1$ -- {\psvdns} fails to compute any singular triplets -- output last values of 
                            $U,S,V$\\
\    &   $2$ -- {\it psvdmax} is reached  -- output last values of 
                            $U,S,V$\\
\   &   $3$ -- no singular values  above threshold {\it sigma} -- output $U=[\ ],V=[\ ],S=[\ ]$\\ \hline
\mc{2}{|c|}{
Output for {\tt svt\underline{\hspace{0.15cm}}irlba.R}
}\\ \hline
\mc{2}{|p{5.0in}|}{
Returns a list {\psvdRns} with $U$ as {\tt \psvdRns\$u}, $V$ as
{\tt \psvdRns\$v}, $S$ 
as a vector {\tt \psvdRns\$d}, 
and {\it FLAG} as {\tt \psvdRns\$flag} with same values as above.}\\ \hline
\end{tabular}
\endgroup}
\end{table}

To aid the reader, and also quickly 
demonstrate simplicity and versatility of our codes, 
we now provide several 
sample command line calls 
for each of them. 
However, 
in the publicly available repository, 
we exemplify how to call each routine 
using different combinations of 
available parameters 
and include demo files that reproduce 
all numerical results from Section~\ref{sec:exp} 
and some additional ones. 
Since MATLAB
{\tt svt\underline{\hspace{0.15cm}}svds.m} and MATLAB/Octave {\tt svt\underline{\hspace{0.15cm}}irlba.m} codes have {\em identical} command line calls, 
we only include one  for {\tt svt\underline{\hspace{0.15cm}}svds.m}.
Matrices $U$, $S$, $V$, and variable $FLAG$ 
(Table~\ref{table1}) 
are returned directly from our 
MATLAB/Octave routines, 
whereas 
{\tt svt\underline{\hspace{0.15cm}}irlba.R}
returns a {\em list} 
{\psvdRs} which can then be used to access 
$U$ as {\tt \psvdRns\$u}, $V$ as
{\tt \psvdRns\$v}, diagonal matrix $S$ 
as a vector {\tt \psvdRns\$d}, 
and {\it FLAG} as {\tt \psvdRns\$flag}. 
For example, MATLAB and {\sf R} commands, 
denoted by {\tt M>>}\, 
and {\tt R>}, respectively, 
used to compute 
all singular triplets with singular values 
exceeding $1.1$ are given by:

{{\tt M>> }}{\tt [U,S,V,FLAG] = svt\underline{\hspace{0.15cm}}svds(A, \tqs sigma\tqs, 1.1);}

{{\tt R> }}{\tt \ 
{\psvdRns} <- svt\underline{\hspace{0.15cm}}irlba(A, sigma = 1.1)}\\[1mm]
Analogously, 
command lines for 
computing 
all singular triplets with singular values exceeding $1.1$ given initial PSVD  
are as follows:

{{\tt M>> }}{\tt [U,\! S,\! V,\! FLAG]\! = \!svt\underline{\hspace{0.15cm}}svds(A,\!\! \tqs sigma\tqs,\!\! 1.1,\!\! \tqs U0\tqs,\!\! U,\!\! \tqs V0\tqs,\!\! V,\!\!\! \tqs S0\tqs,\!\! S);}

{{\tt R> }}{\tt \ 
{\psvdRns} <- svt\underline{\hspace{0.15cm}}irlba(A,\! sigma=1.1, \! 
{{\psvdzns}\! = \!{\psvdRns}})}\\[1mm]
Finally, for sample calls where energy percentage is 
used for thresholding purposes see Example~\ref{example3}.

%%%%%%%%%%%%%%%%%%%%%%%%%%%%%%
%%
%%  Sec 4: Numerical Examples
%%
%%%%%%%%%%%%%%%%%%%%%%%%%%%%%%
\section{Numerical Examples} \label{sec:exp}
All  examples were carried out 
on a MacBook Pro (M1 Max) 
with 32GB of memory and macOS Sonoma 14.4.1. 
Computations were performed using  
MATLAB R2024a 
(Examples~\ref{example1}-\ref{example2}) 
or Rstudio–2023.06.1+524 (v4.3.2) (Example~\ref{example3}),
both with machine epsilon $\epsilon=2.2 \!\cdot\! 10^{-16}$. 
The recorded total cpu times (in seconds), 
denoted by $\Tcputot$, 
were done either with 
MATLAB {\tt tic-toc} or {\sf R} {\tt proc.time} commands. 
For Example~\ref{example1} 
multiple runs were used before the final 
results were recorded, 
while for 
Examples \ref{example2} and \ref{example3} 
only a single run was performed. 
The authors noticed that when the main comparison
of methods was timing, 
very little differences 
were observed between the runs and 
the reported results are typical.

\medskip
\begin{example} \label{example1} 
Here, 
we compare performance 
of 
our two codes 
{\tt svt\underline{\hspace{0.15cm}}svds.m} 
and 
{\tt svt\underline{\hspace{0.15cm}}irlba.m}
with 
the only other currently available comparable MATLAB alternative, 
{\tt svt.m}\footnote{\label{footnote-svt-github}Codes available at: \url{https://github.com/Hua-Zhou/svt}, retrieved on March 30, 2024.} \cite{li2017svt}. 
For comparison we used 
12 sparse matrices 
listed in Table~\ref{table3} 
from the SuiteSparse Matrix Collection 
\cite{davis2011university}
which vary in size, structure, and range/magnitude of singular values. 
The first six matrices,  
along with the corresponding 
choice of threshold values {\it sigma}, 
are exactly the same as used in 
\cite{li2017svt} 
thus allowing for a fair comparison. 
The low-rank perturbation $LR^T$ of the first five matrices, 
with $L$ and $R$ being random matrices  with $10$ columns, 
is consistent with examples in \cite{li2017svt} 
-- same RandStream used  
for {\tt svt.m}\footnoteref{footnote-svt-github}.

Each matrix 
problem was run 10 times.  
In all three codes, 
the common parameters 
{\it sigma}, {\it tol}, {\it k},  and {\it incre}, 
were uniformly set, 
while all other 
parameters were set to default with the exception of 
$tol=10^{-8}$,  $kmax=100$, and $psvdmax=800$ 
(see Table~\ref{table1}). 
In Table~\ref{table3} we also 
recorded the average cpu times for $\Tcputot$ 
and maximum values for errors 
$\Aerrtot 
\!=\!
\sqrt{\|\AVerr\|^2+\|\ATUerr\|^2}$ cf. (\ref{eq:psvd})
and 
$\UVerr
\!=\!
\sqrt{\|V^TV-I\|^2+\|U^TU-I\|^2}$.  
\begin{table}[htb!]
\centering
{\small
\caption{%
Example~\ref{example1}: Errors $\Aerrtot$ and $\UVerr$ 
and the total cpu timing $\Tcputot$ in seconds, 
for each of the codes that 
successfully computed all singular values above {\it sigma}.  Value ``$\star$'' indicates the code didn't successfully compute all the singular values.
} 
\label{table3}
\begingroup
\setlength{\tabcolsep}{0.7pt} % Default value: 6pt
\renewcommand{\arraystretch}{0.9} % Default value: 1
\setlength\extrarowheight{1.3pt}
\begin{tabular}{|l|c|c|c|c|c|c|c|c|c|c|c|c|} \hline
matrix  &  $sigma$ & \# & 
\mc{3}{c|}{\tt svt\underline{\hspace{0.15cm}}svds.m} &
\mc{3}{c|}{\tt svt\underline{\hspace{0.15cm}}irlba.m} &
\mc{3}{c|}{\tt svt.m}\\ 
\hhline{~~~|---------}
%\cline{4-12}
(size) &\  &SVs &$\Tcputot$ &$\Aerrtot$ &$\UVerr$ &$\Tcputot$ 
& $\Aerrtot$ &$\UVerr$ &$\Tcputot$ &$\Aerrtot$ &$\UVerr$ \\  
\hline
%
% 1. bibd_20_10 (190 x 184,756) (SVT paper)
%    From SVT paper: For this matrix computes A = A + LR' where L and R are n x 10
%    normal random i.e. randn. Use the the same random stream as in SVT paper.   
%    Largest SV on order 1,000 and smallest on order 100 with many repeated SVs
% -------------------------------------------------------------------------
\makecell[l]{1. bibd\underline{\hspace{0.15cm}}20\underline{\hspace{0.15cm}}10$+LR^T$\\
($190\! \times\! 184756$)}&
% sigma
466 &
% \#singvals
20 & 
% svt_svds cpu time
2.1s &
% svt_svds PSVD_err
$10^{-11}$ &
% svt_svds UV_err
$ 10^{-14}$ &
% svt_irlba cpu time
2.3s &
% svt_irlba PSVD_err
$ 10^{-11}$ &
% svt_irlba UV_err
$ 10^{-14}$ &
% svt cpu time
6.6s &
% svt PSVD_err
$ 10^{-5}$ &
% svt UV_err
$ 10^{-8}$\\ \hline
%
% 2. bibd_22_8 (231 x 319,770)  (SVT paper)
%    From SVT paper: For this matrix computes A = A + LR' where L and R are n x 10
%    normal random i.e. randn. Use the the same random stream as in SVT paper.   
%    Largest SV on order 10,000 and smallest on order 100 with many repeated SVs
% -------------------------------------------------------------------------
\makecell[l]{2. bibd\underline{\hspace{0.15cm}}22\underline{\hspace{0.15cm}}8$+LR^T$\\
($231\! \times\! 319770$)}&
% sigma
435.79 &
% \#singvals
20 & 
% svt_svds cpu time
4.7s &
% svt_svds PSVD_err
$10^{-10}$ &
% svt_svds UV_err
$10^{-14}$ &
% svt_irlba cpu time
4.8s &
% svt_irlba PSVD_err
$10^{-10}$ &
% svt_irlba UV_err
$10^{-14}$ &
% svt cpu time
31.2s &
% svt PSVD_err
$10^{-5}$ &
% svt UV_err
$10^{-9}$\\ \hline
%
% 3.  bfwb398  size: 398 x 398 (SVT paper)
%    From SVT paper: For this matrix computes A = A + LR' where L and R are n x 10
%    normal random i.e. randn. Use the the same random stream as in SVT paper.   
%    Largest SV on order 1d-5 and smallest on order 1d-6 
% -------------------------------------------------------------------------
\makecell[l]{3. bfwb398$+LR^T$\\
($398\! \times\! 398$)}&
% sigma
$2.2\!\cdot\! 10^{-5}$ &
% \#singvals
50 & 
% svt_svds cpu time
0.06s &
% svt_svds PSVD_err
$10^{-12}$ &
% svt_svds UV_err
$10^{-15}$ &
% svt_irlba cpu time
0.08s &
% svt_irlba PSVD_err
$10^{-12}$ &
% svt_irlba UV_err
$10^{-15}$ &
% svt cpu time
0.54s &
% svt PSVD_err
$10^{-12}$ &
% svt UV_err
$10^{-10}$\\ \hline
%
% 4.  mhd4800b size: 4,800 x 4,800 (SVT paper)
%    From SVT paper: For this matrix computes A = A + LR' where L and R are n x 10
%    normal random i.e. randn. Use the the same random stream as in SVT paper.   
%    Largest SV order of 2 and 50th SV on order of 0.09
% -------------------------------------------------------------------------
\makecell[l]{4. mhd4800b$+LR^T$\\
($4800\! \times\! 4800$)}&
% sigma
$0.122$ &
% \#singvals
50 & 
% svt_svds cpu time
2.5s &
% svt_svds PSVD_err
$10^{-8}$ &
% svt_svds UV_err
$10^{-11}$ &
% svt_irlba cpu time
2.2s &
% svt_irlba PSVD_err
$10^{-8}$ &
% svt_irlba UV_err
$10^{-11}$ &
% svt cpu time
15.5s &
% svt PSVD_err
$10^{-8}$ &
% svt UV_err
$10^{-9}$\\ \hline
%
% 5.  cryg10000 size: 10,000 x 10,000  (SVT paper)
%    From SVT paper: For this matrix computes A = A + LR' where L and R are n x 10
%    normal random i.e. randn. Use the the same random stream as in SVT paper.   
%    Largest SV order of 40,000 and 50th SV on order of 20,000
% -------------------------------------------------------------------------
\makecell[l]{5. cryg10000$+LR^T$\\
($10000\! \times\! 10000$)}&
% sigma
$21757$ &
% \#singvals
50 & 
% svt_svds cpu time
16.2s &
% svt_svds PSVD_err
$10^{-4}$ &
% svt_svds UV_err
$10^{-14}$ &
% svt_irlba cpu time
10.7s &
% svt_irlba PSVD_err
$10^{-4}$ &
% svt_irlba UV_err
$10^{-14}$ &
% svt cpu time
21.8s &
% svt PSVD_err
$10^{-4}$ &
% svt UV_err
$10^{-9}$\\ \hline
%
% 6.  stormG2_1000  size: 528,185 x 1,377,306 (SVT paper)
%    From SVT paper: % Largest SV = 3288 the 50th SV = 632.46 
%    and 100th SV = 6.7768 - there are many repeats.
% ----------------------------------------------------------
\makecell[l]{6. stormG2\underline{\hspace{0.15cm}}1000\\
 ($528185\! \times\! 1377306$)}&
% sigma
$632.46$ &
% \#singvals
50 & 
% svt_svds cpu time
25.9s &
% svt_svds PSVD_err
$10^{-6}$ &
% svt_svds UV_err
$10^{-14}$ &
% svt_irlba cpu time
72.1s &
% svt_irlba PSVD_err
$10^{-6}$ &
% svt_irlba UV_err
$10^{-14}$ &
% svt cpu time
183.6s &
% svt PSVD_err
$10^{-11}$ &
% svt UV_err
$10^{-14}$\\ \hline
%
% 7.  maragal_2     size: 555 x 350
%     Largest SV -  10.29508 numerical rank is 220
% ---------------------------------------
\makecell[l]{7. maragal\underline{\hspace{0.15cm}}2\\ ($555\! \times\! 350$)}&
% sigma
$10^{-10}$ &
% \#singvals
171 & 
% svt_svds cpu time
0.34s &
% svt_svds PSVD_err
$10^{-14}$ &
% svt_svds UV_err
$10^{-15}$ &
% svt_irlba cpu time
0.45s &
% svt_irlba PSVD_err
$10^{-14}$ &
% svt_irlba UV_err
$10^{-15}$ &
% svt cpu time
$\star$ &
% svt PSVD_err
$\star$ &
% svt UV_err
$\star$\\ \hline
%
% 8.  illc1033  size: 1,033 x 320
%    Largest SV = 2.144 and smallest SV = 0.00011353. Has a large number of
%    repeating SV that ~ 1 starting around SV(106)
% -------------------------------------------------------------------------
\makecell[l]{8. illc1033\\
($1033\! \times\! 320$)}&
% sigma
$0.9$ &
% \#singvals
197 & 
% svt_svds cpu time
0.76s &
% svt_svds PSVD_err
$10^{-8}$ &
% svt_svds UV_err
$10^{-13}$ &
% svt_irlba cpu time
1.1s &
% svt_irlba PSVD_err
$10^{-9}$ &
% svt_irlba UV_err
$10^{-10}$ &
% svt cpu time
$\star$ &
% svt PSVD_err
$\star$ &
% svt UV_err
$\star$\\ \hline
%
% 9.  well1850  size: 1,850 x 712
%    Largest SV = 1.793 and smallest SV = 0.0161. Has a large number of
%    repeating SV that ~ 1 starting around SV(265) 
% -------------------------------------------------------------------------
\makecell[l]{9. well1850\\
($1850\! \times\! 712$)}&
% sigma
$0$ &
% \#singvals
712 & 
% svt_svds cpu time
5.2s &
% svt_svds PSVD_err
$10^{-8}$ &
% svt_svds UV_err
$10^{-9}$ &
% svt_irlba cpu time
6.3s &
% svt_irlba PSVD_err
$10^{-8}$ &
% svt_irlba UV_err
$10^{-10}$ &
% svt cpu time
$\star$ &
% svt PSVD_err
$\star$ &
% svt UV_err
$\star$\\ \hline
%
% 10.  JP size: 87,616 x 67,320
%    Largest SV = 4223.096 the 50th SV = 1979.547 and 100th SV = 1583.588
% -------------------------------------------------------------------------
\makecell[l]{ 10. JP\\
($87616\! \times\! 67320$)}&
% sigma
$1500$ &
% \#singvals
118 & 
% svt_svds cpu time
31.1s &
% svt_svds PSVD_err
$10^{-5}$ &
% svt_svds UV_err
$10^{-15}$ &
% svt_irlba cpu time
32.4s &
% svt_irlba PSVD_err
$10^{-7}$ &
% svt_irlba UV_err
$10^{-15}$ &
% svt cpu time
55.5s &
% svt PSVD_err
$10^{-5}$ &
% svt UV_err
$10^{-9}$\\ \hline
%
% 11.  Rel8 size: 345,688 x 12,347
% Largest SV = 18.2733, SV(50) = 12.4838, SV(100) = 12.1186, SV(150) = 11.892
% -------------------------------------------------------------------------
\makecell[l]{11. Rel8\\
($345688\! \times\! 12347$)}&
% sigma
$12.5$ &
% \#singvals
47 & 
% svt_svds cpu time
16.5s &
% svt_svds PSVD_err
$10^{-7}$ &
% svt_svds UV_err
$10^{-14}$ &
% svt_irlba cpu time
29.0s &
% svt_irlba PSVD_err
$10^{-7}$ &
% svt_irlba UV_err
$10^{-14}$ &
% svt cpu time
22.2s &
% svt PSVD_err
$10^{-7}$ &
% svt UV_err
$10^{-9}$\\ \hline
%
% 12.  Rucci1 size: 1,977,885 x 109,900
%    Largest SV = 7.06874 the 10th SV =  6.73798 and 25th SV = 6.5622
% -------------------------------------------------------------------------
\makecell[l]{12. Rucci1\\
($1977885\! \times\! 109900$)}&
% sigma
$6.5$ &
% \#singvals
33 & 
% svt_svds cpu time
55.1s &
% svt_svds PSVD_err
$10^{-7}$ &
% svt_svds UV_err
$10^{-14}$ &
% svt_irlba cpu time
114.6s &
% svt_irlba PSVD_err
$10^{-8}$ &
% svt_irlba UV_err
$10^{-14}$ &
% svt cpu time
75.2s &
% svt PSVD_err
$10^{-8}$ &
% svt UV_err
$10^{-9}$\\ \hline
\end{tabular}
\endgroup}
\end{table}

From Table~\ref{table3} 
it is evident that 
with respect to cpu timings 
{\tt svt\underline{\hspace{0.15cm}}svds.m} 
clearly outperforms 
the other two codes 
for all test matrices, 
especially when compared to 
{\tt svt.m} which is significantly slower. 
On the other hand,  
only a minor variation in timings 
between our two codes 
is not surprising, given their main difference is 
the choice of a {\tt psvd} method 
in Algorithm~\ref{alg1}. 
These two 
{\tt psvd} methods 
have the same underlying structure,  
and so 
the slight difference in 
timings in Table~\ref{table3} 
is accounted by differences in 
their  implementation, 
e.g., reorthogonalization of basis vectors, 
heuristics used for convergence, 
detection of multiple singular values, 
and selection of vectors to use for restarting 
\cite{baglama2005augmented,larsen1998lanczos}.

Turning our attention to 
errors 
$\Aerrtot$ 
and 
$\UVerr$, 
Table~\ref{table3} shows that 
both of our codes maintained strong orthogonality 
among both basis vectors, $U$ and $V$, 
which was not always the case for {\tt svt.m}.
Furthermore, 
{\tt svt.m} failed to 
compute 
all of the singular values above 
the prescribed threshold for 
three of the test matrices. 
In the case of the matrices 
{\em maragal\underline{\hspace{0.15cm}}2} 
and 
{\em well1850,} the number of 
needed singular values was very close to their ranks   
thus resulting in {\tt svt.m} 
recomputing mapped values -- 
the hybrid structure and 
use of {\tt blksvdpwr} routine prevented 
our codes succumbing to the same fate, emphasizing the need for such a check. 
Finally, for the matrix {\em illc1033}, 
which has  large number of multiple singular values clustered around $1.0$, 
{\tt svt.m} struggled and was only able to compute 
$128$ out of the $197$ 
singular values above $sigma = 0.9$. 
Setting  $tol=10^{-16}$ for {\tt svt.m} 
also did not result in success. 
\end{example}

\medskip
\begin{example} \label{example2} 
In this example 
we continue to compare our two MATLAB thresholding 
codes 
with {\tt svt.m} \cite{li2017svt}, 
but this time through the lens 
of an application to 
{\em matrix completion} 
briefly discussed in Section \ref{sec:intro}. 
We emphasize here that our objective 
is {\em not} 
to devise a new  competitive 
algorithm for matrix completion, but 
instead to illustrate 
 the ease at which 
our codes can be incorporated 
into larger numerical schemes 
and have a  meaningful impact. 
More specifically, 
we replace the main computational kernel, 
the PSVD section 
given by lines 5--8 in  \eqref{matrix-completion-alg}, 
in the original SVT-MC implementation 
\cite[{\rm Alg. 1}]{cai2010-matrix-completion}
with 
{\tt svt\underline{\hspace{0.15cm}}svds.m}, 
{\tt svt\underline{\hspace{0.15cm}}irlba.m}, 
and {\tt svt.m}. 
We do not compare 
here against alternatives 
that either modify other parts of SVT-MC Algorithm 
or ones that simply replace line 6 in \eqref{matrix-completion-alg}. For example, 
in \cite{feng2018faster} 
it appears that the authors utilize 
a randomized SVD method 
solely for line 6 in 
\eqref{matrix-completion-alg}
(publicly available code {\tt fastsvt}\footnote{Date: June 28, 2024. GitHub: \url{https://github.com/XuFengthucs/fSVT}\label{fnlabel-fastsvt}}) 
and do illustrate 
an impressive 
performance increase over 
when only {\tt svds} is used. 
But, as far as we can tell from 
the publicly available 
codes\footref{fnlabel-fastsvt}, 
{\tt fastsvt} constantly  recomputes the PSVD 
until the desired threshold is met, 
i.e., dismisses the knowledge of the 
previously computed PSVDs,  
thus making the comparison 
with our routines which are based  
on a completely different premise 
 beyond  the scope of this paper.

While several versions   
of the original SVT-MC Algorithm are publicly available (MATLAB, C, and FORTRAN),\footnote{
SVT--MC codes:    \url{https://www.convexoptimization.com/wikimization/index.php/Matrix_Completion.m}\\ 
\phantom{~~~~}SVT--MC codes:  \url{https://github.com/stephenbeckr/SVT/blob/master/SVT.m}} 
we only consider its MATLAB implementation. 
As 
stated in Section \ref{sec:intro}, at the time of publication 
\cite{cai2010-matrix-completion}, 
the authors used 
PROPACK \cite{larsen1998lanczos} 
as a method of choice for 
the PSVD computation (line 6 in
\eqref{matrix-completion-alg}) 
instead of the built--in MATLAB routine {\tt svds}. 
Since then, 
the {\tt svds} has changed 
and is now based on PROPACK with thick--restarting \cite{MATLAB:R2024a}, 
and so, for the purposes of comparison 
with our implementations of 
{\tt svt\underline{\hspace{0.15cm}}svds}, 
{\tt svt\underline{\hspace{0.15cm}}irlba}, 
and {\tt svt}, 
we use MATLAB's {\tt svds} 
for line 6 in \eqref{matrix-completion-alg}.

\begin{equation}\label{matrix-completion-alg}
\begin{array}{l} 
\ \ \ \ \ \ \ \ \ \ \vdots  \\
\left.\begin{array}{l}
5. \ \ \text{\textbf{repeat}} \\
6. \ \ \  \ \text{Compute\ } [U^{k-1},S^{k-1},V^{k-1}]_{s_k}\\
7. \ \ \  \ \text{Set\ } s_k = s_k + \ell   \\
8. \ \ \text{\textbf{until}\ } \sigma_{sk-\ell}^{k-1} \leq \tau \\
\end{array}\right\}\\
\ \ \ \ \ \ \ \ \ \ \vdots  \\[2mm]
\end{array}
\left.\begin{array}{l}
\text{PSVD  section \,}\\
\text{of \textbf{SVT-MC}} \\
\text{Algorithm} \\ \text{\cite[\text{Alg. 1}]{cai2010-matrix-completion}}
%%%%
\end{array}\right\}
\begin{array}{l}
\text{replace\ with:\ }\\ \text{\tt svt\underline{\hspace{0.15cm}}svds}\\  \text{\tt svt\underline{\hspace{0.15cm}}irlba}\ \text{or}\\  \text{\tt svt}
\end{array}
\end{equation}
\vspace*{-3mm}

In order to analyze, 
within a larger matrix completion framework, 
the performance of our proposed 
alternatives for the PSVD section 
in (\ref{matrix-completion-alg}), 
we closely follow
the recommendations from 
\cite{cai2010-matrix-completion} 
when it comes to selecting test matrices 
and all of the parameters. 
We do so since 
an exhaustive presentation 
with all of the choices 
for different parameters is simply infeasible, 
e.g., 
$\tau$ (line 8  
in \ref{matrix-completion-alg}) 
was chosen heuristically in \cite{cai2010-matrix-completion}, 
though different values for it  
could lead to more/less iterations of the SVT-MC Algorithm or even no convergence, see \cite[Table 5.2]{cai2010-matrix-completion}. 
 We chose our 
test matrix $M$ to have the same 
structure (distributed Gaussian entries) 
as in \cite{cai2010-matrix-completion} and made it rectangular instead of square, 
i.e., $M = M_LM_R$,  where $M_L \in \R^{2000 \times r}$, $M_R \in \R^{r \times 20000}$, and $r$ is a chosen rank (ranging from $1\%$  to $15\%$) with 
an oversampling ratio of $4$ (see \cite[Table 5.1]{cai2010-matrix-completion}).

All four runs of 
SVT-MC Algorithm had 
successfully recovered the rank 
-- we used $10^{-3}$ 
for an overall convergence tolerance and for all PSVD methods tolerance was set to 
$10^{-8}$ 
and initial  $k=s_k$. 
For each of the implementations, in 
Table~\ref{table4} 
we recorded 
two cpu timings -- 
$\Tcputot$ for the overall convergence and $\Tcpupsvd$ for just the PSVD computations (lines 5--8 in \eqref{matrix-completion-alg}). 
Easy analysis shows that in virtually all of the cases roughly 90\%-95\% of overall cpu timing is 
due to its respective PSVD section, 
thus clearly indicating the importance of developing flexible portable routines that can 
be easily integrated 
as computational kernels 
in other algorithms, such as the ones presented here. For all choices of parameters {\em rank} and {\em sampling} in Table~\ref{table4}  our code {\tt svt\underline{\hspace{0.15cm}}irlba.m} significantly outperformed others, especially as rank increased.

\begin{table}[htb!]
\centering
{\small
\caption{Example~\ref{example2}: Comparison of the PSVD codes  
used in SVT--MC Algorithm for 
matrix $M = M_LM_R$, where $M_L \in \R^{2000 \times r}$ and $M_R \in \R^{r \times 20000}$.  
$\Tcputot$ is overall timing and $\Tcpupsvd$ timing just for the PSVD method (lines 5--8 in 
\eqref{matrix-completion-alg}).} 
\label{table4}
\vspace*{-3mm}
\begingroup
\setlength{\tabcolsep}{5pt} % Default value: 6pt
\renewcommand{\arraystretch}{.9} % Default value: 1
\setlength\extrarowheight{2pt}
\begin{tabular}{|l|c|c|c|c|c|c|c|c|} \hline
rank (\%)  &   \mc{2}{c|}{\tt svds.m}  &
\mc{2}{c|}{\tt svt\underline{\hspace{0.15cm}}svds.m} &
\mc{2}{c|}{\tt svt\underline{\hspace{0.15cm}}irlba.m} &
\mc{2}{c|}{\tt svt.m}\\ 
\hhline{~|--------}
%\cline{3-10}
 sample (\%)    & 
$\Tcputot$ & 
$\Tcpupsvd$  & 
$\Tcputot$  & $\Tcpupsvd$  &
$\Tcputot$  & $\Tcpupsvd$   &
$\Tcputot$  & $\Tcpupsvd$   \\ 
\hline
%
% 1. rank = 20 = 1%
% -------------------------------------------------------------------------
\makecell[l]{20 \ ($1  \%$)\\
$4  \%$} &
% svds: cpu (SVT)
1453.1s &
% svds: cpu (PSVD)
1305.7s&
% svt_svds: cpu (SVT)
1509.3s &
% svt_svds: cpu (PSVD)
1360.3s&
% svt_irlba: cpu (SVT)
902.1s &
% svt_irlba: cpu (PSVD)
753.1s&
% svt: cpu (SVT)
1740.1s &
% svt: cpu (PSVD)
1593.6s
\\ \hline
% 2. rank = 50 = 2.5%
% -------------------------------------------------------------------------
\makecell[l]{50 \ ($2.5  \%$)\\
$11  \%$} &
% svds: max. restarts 
% svds: cpu (SVT)
147.6s &
% svds: cpu (PSVD)
133.03s&
% svt_svds: cpu (SVT)
143.3s &
% svt_svds: cpu (PSVD)
128.2s&
% svt_irlba: cpu (SVT)
132.5s &
% svt_irlba: cpu (PSVD)
118.0s&
% svt: cpu (SVT)
222.6s &
% svt: cpu (PSVD)
208.1s
\\ \hline
% 3. rank = 100 = 5%
% -------------------------------------------------------------------------
\makecell[l]{100 ($5  \%$)\\
$22  \%$} &
% svds: max. restarts 
% svds: cpu (SVT)
371.1s &
% svds: cpu (PSVD)
345.3s&
% svt_svds: cpu (SVT)
333.0s &
% svt_svds: cpu (PSVD)
307.1s&
% svt_irlba: cpu (SVT)
311.1s &
% svt_irlba: cpu (PSVD)
285.5s&
% svt: cpu (SVT)
647.5s&
% svt: cpu (PSVD)
621.8s
\\ \hline
% 4. rank = 200 = 10%
% -------------------------------------------------------------------------
\makecell[l]{200 ($10  \%$)\\
$44  \%$} &
% svds: max. restarts  
% svds: cpu (SVT)
1341.4s &
% svds: cpu (PSVD)
1296.9s&
% svt_svds: cpu (SVT)
1020.4s &
% svt_svds: cpu (PSVD)
976.4s&
% svt_irlba: cpu (SVT)
981.3s &
% svt_irlba: cpu (PSVD)
937.4s&
% svt: cpu (SVT)
1842.5s &
% svt: cpu (PSVD)
1799.4s
\\ \hline
% 4. rank = 300 = 15%
% -------------------------------------------------------------------------
\makecell[l]{300 ($15  \%$)\\
$65  \%$} &
% svds: max. restarts 
% svds: cpu (SVT)
2933.5s &
% svds: cpu (PSVD)
2877.2s&
% svt_svds: cpu (SVT)
1930.5s &
% svt_svds: cpu (PSVD)
1874.5s&
% svt_irlba: cpu (SVT)
1867.9s &
% svt_irlba: cpu (PSVD)
1812.1s&
% svt: cpu (SVT)
3276.5s &
% svt: cpu (PSVD)
3220.7s
\\ \hline

\end{tabular}
\endgroup}
\end{table}
\end{example}

\medskip
\begin{example} \label{example3} 
In our final example, 
we investigate performance of the {\sf R}-implementation  
of our  Algorithm~\ref{alg1}, 
{\tt svt\underline{\hspace{0.15cm}}irlba.R}, 
and demonstrate its versatility 
of using energy percentage \eqref{energynrmse} 
as a thresholding condition. 
For that purpose, we consider 
the classical 
problem of image compression 
and analyze {\tt svt\underline{\hspace{0.15cm}}irlba.R}'s  performance  
against a comparable and highly 
popular {\sf R} package {\tt rsvd} 
\cite{erichson2019randomized}. 

For the sake of fair comparison, 
we consider the 
$1600 \times 1200$ 
grayscale image {\tt tiger}\footnote{Image is also freely available:
\url{https://en.wikipedia.org/wiki/File:Siberischer_tiger_de_edit02.jpg}. } 
that was used in \cite{erichson2019randomized} and
readily available 
from the {\tt rsvd} 
package:

{\tt R> }{\tt data("tiger", package = "rsvd")}\\[1mm] 
When initializing our 
{\tt svt\underline{\hspace{0.15cm}}irlba.R} code 
all parameters were kept  at 
default with the exception of 
$tol = 10^{-5}$ 
and $psvdmax = 1200$. 
Based on the discussion in \cite{erichson2019randomized}, 
we consider $nrmse = 0.12081$ 
as a thresholding consideration, 
which together with \eqref{energynrmse} 
in turn corresponds to 
energy percentage $98.54\%$
and is 
called 
via the following {\sf R} command:

{\tt R> }{\tt {\psvdRns} <- svt\underline{\hspace{0.15cm}}irlba(tiger, \!\!tol \!\!= \!\!1e-5,\!\! energy\!\! = \!\!0.9854, \!\!psvdmax \!= \!1200)}

\noindent
The compressed image can be reconstructed 
with the output from {\tt svt\underline{\hspace{0.15cm}}irlba.R} 
and displayed with the {\sf R} command {\tt image}, 
see \cite{erichson2019randomized} for details. 
In this case {\tt svt\underline{\hspace{0.15cm}}irlba.R} 
successfully returned an output with $k=100$ singular triplets (same $k$ value as in \cite{erichson2019randomized}) and
the desired $nrmse = 0.12081$ while requiring 
$\Tcputot = 6.2$s with errors 
$\Aerrtot=10^{-13}$ and $\UVerr = 10^{-14}$. 
We compared our results against  
{\tt rsvd} using its default parameter 
values though with one major distinction -- 
{\tt rsvd} requires a priori knowledge 
of the number of desired singular triplets, 
i.e., it required setting $k=100$. 
In this case, {\tt rsvd} clocked total cpu timing of  $\Tcputot = 1.2$s being several times 
faster than our method {\tt svt\underline{\hspace{0.15cm}}irlba.R}. 
While the {\tt rsvd} outputted a PSVD with 
$\UVerr = 10^{-14}$, it only achieved 
$nrmse = 0.12238$.  
This is the same output displayed within three significant digits as in 
\cite[Table~1]{erichson2019randomized} for {\tt rsvd} $q=2$ (default value). 
It is important to highlight that 
$\Aerrtot$ is {\em not} an appropriate 
error measure for {\tt rsvd} 
since $\|\AVerr\| = 10^{-13}$ 
and 
$\|\ATUerr\|=10^{0}$. 
Furthermore, {\tt rsvd} does not have
an input parameter for tolerance or error measurement -- this is checked outside the function. 
This observation 
also 
tends to plague  other numerical 
schemes that use energy as a stopping criteria, whereas in our case 
the energy level {\em is not} 
used 
to regulate the
underlying 
{\tt irlba.R}, {\tt irlba.m} and {\tt svds.m},  
but rather as an exit thresholding condition 
for the provided wrapper codes (see Remark~\ref{rem:energy}).

As a possible strategy to achieve 
a desired $nrmse$ value, 
as suggested in \cite[Table~1]{erichson2019randomized}, 
one may try to  increase 
the number of  
additional power iterations, $q$. 
On our first try, we set $q=3$  
and obtained an error of $nrmse = 0.12145$ 
which when rounded 
to three significant digits 
matches the error of $nrmse=0.121$ 
reported in 
\cite[Table~1]{erichson2019randomized}. 
However, if we wish to 
achieve the same accuracy 
in this error measurement ($nrmse$)
that meant setting $q=15$ 
which resulted in $\Tcputot = 6.3$s and $\UVerr = 10^{-14}$ with $nrmse = 0.12082$.
In other words, with respect to cpu timings 
{\tt svt\underline{\hspace{0.15cm}}irlba.R} 
performed at least as good as {\tt rsvd}, 
but with better errors and 
no required a priori knowledge on the number of needed singular values.

Now, one might take  the output from the 
first calculation 
and compute  the energy percentage 
$99\%$: 
 
{\tt R> }{\tt {\psvdRns} <- svt\underline{\hspace{0.15cm}}irlba(tiger,\!\! tol \!=\! 1e-5,\!\!  energy \!=\! 0.99,\!\!  psvdmax \!\!=\!\! 1200,\!\!  {\psvdzns} \!=\! {\psvdRns}})

\noindent
For {\tt svt\underline{\hspace{0.15cm}}irlba.R}  
this required additional $6.3$s, computing $155$ singular triplets with $nrmse = 0.09991$, $\Aerrtot=10^{-7}$ and $\UVerr = 10^{-14}$. 
Here the default value of $k=6$ is used 
for {\tt svt\underline{\hspace{0.15cm}}irlba.R} 
which may not always be the most efficient
and can be improved upon if there is 
a sensible 
guess for the number of 
additional initial number of 
singular triplets needed. 
Finally, we 
conclude that was not even 
an option with the {\tt rsvd} package.

\end{example}

%%%%%%%%%%%%%%%%%%%%%%%%%%%%%%
%%
%%  Sec 5: Conclusion
%%
%%%%%%%%%%%%%%%%%%%%%%%%%%%%%%
\section{Concluding Remarks} \label{sec:conc} 

 We presented several publicly available 
well-documented software implementations 
(MATLAB and {\sf R}) 
of a new hybrid algorithm  demonstrating the ability to successfully and efficiently compute all of the singular values of $A$ above either a given threshold  or  a given energy percentage. 
Our codes consistently 
outperform 
comparable implementations 
tackling the same problem 
and in many cases 
the difference is quite substantial. 
Finally, the robustness 
and flexibility of the codes, as demonstrated with applications to matrix completion and image compression in Examples~\ref{example2}-\ref{example3}, 
highlight their ease of use 
either as a standalone program  
or as a part of a more complex numerical routine.  

\medskip

\bmhead{Code Availability}
The software is available as 
{\tt svt\underline{\hspace{0.15cm}}svds.m}, 
{\tt svt\underline{\hspace{0.15cm}}irlba.m}, 
and 
{\tt svt\underline{\hspace{0.15cm}}irlba.R} 
from the author's GitHub account 
(\href{https://github.com/jbaglama/svt}{https://github.com/jbaglama/svt}).

%%%%%%%%%%%%%%%%%%%%%%%%%%
\bibliography{BagCP2024_ARXIV_Version_Submission}% common bib file
%% if required, the content of .bbl file can be included here once bbl is generated
%\input references_svt

\end{document}